\newcommand{\bsgamma}{\boldsymbol{\gamma}}
\newcommand{\setu}{{\mathfrak{u}}}
\newcommand{\bszero}{\boldsymbol{0}}
\newcommand{\bst}{\boldsymbol{t}}
\newcommand{\bsx}{\boldsymbol{x}}
\newcommand{\bsy}{\boldsymbol{y}}
\newcommand{\rd}{\,\mathrm{d}}
\newcommand{\NN}{\mathbb{N}}
\newcommand{\setv}{{\mathfrak{v}}}
\newcommand{\setU}{{\mathfrak{U}}}
\newcommand{\setw}{{\mathfrak{w}}}
\newtheorem{theorem}{Theorem}
\newtheorem{corollary}{Corollary}
\newtheorem{proposition}{Proposition}
\newtheorem{definition}{Definition}
\newtheorem{remark}{Remark}
\newtheorem{example}{Example}
\newenvironment{proof}{\begin{trivlist}
    \item[\hskip\labelsep{\bf Proof.}]}{$\hfill\Box$\end{trivlist}}
\begin{document}

\title{Truncation Dimension for Function Approximation}

\author{Peter Kritzer\thanks{Supported by the Austrian
Science Fund (FWF) Project F5506-N26.}, Friedrich Pillichshammer\thanks{Supported by the 
Austrian Science Fund (FWF) Project F5509-N26.}, and G.W. Wasilkowski}

\date{}

\maketitle

\abstract{
We consider approximation of functions of $s$ variables, where $s$ is very 
large or infinite, that belong to weighted anchored spaces. We study 
when such functions can be approximated by algorithms designed for 
functions with only very small number ${\rm dim^{trnc}}(\varepsilon)$
of variables. Here $\varepsilon$ is the error demand and we refer
to ${\rm dim^{trnc}}(\varepsilon)$ as
the $\varepsilon$-{\em truncation dimension}. 
We show that for sufficiently fast decaying product 
weights and modest error demand 
(up to about $\varepsilon \approx 10^{-5}$) the truncation dimension is surprisingly very small. 
} 

\section{Introduction}\label{sec:intro}
In this paper, we consider weighted anchored spaces of $s$-variate functions 
with bounded (in $L_p$ norm, $1\le p\le \infty$) mixed partial
derivatives of order one. 
More precisely, the functions being approximated are from the 
Banach space $F_{s,p,\bsgamma}$ whose norm is given by 
\[
\|f\|_{F_{s,p,\bsgamma}}\,=\,
\left(\sum_\setu \gamma_\setu^{-p}\,\int_{[0,1]^{|\setu|}}
  |f^{(\setu)}([\bsx_\setu;\bszero_{-\setu}])|^p
  \rd \bsx_\setu\right)^{1/p}. 
\]
Here,
the summation is with respect to the subsets $\setu$ of
$[s]=\{1,\dots,s\}$ (when $s=\infty$ the summation
is with respect to all finite subsets of $\NN$),
and $f^{(\setu)}([\bsx_\setu;\bszero_{-\setu}])$ denotes the
mixed partial derivatives $\prod_{j\in\setu}\frac{\partial}{\partial x_j}$
of $f$ with values of $x_j$ for $j\notin\setu$ being zero. 
A crucial role is played by 
the weights $\gamma_\setu$, which are non-negative real
numbers that quantify the importance of sets
$\bsx_\setu=(x_j)_{j\in\setu}$ of variables.

We continue our considerations from \cite{KrPiWa}, where we dealt with 
low truncation dimension for numerical integration. 
We are interested in a very large number $s$ of variables including $s=\infty$. 
Similar to \cite{KrPiWa}, by $\varepsilon$-{\em truncation dimension} 
(or {\em truncation dimension} for short) we mean (roughly) the smallest 
number $k$ such that the worst case error (measured in the $L_q$ space) of approximating 
$s$-variate functions $f(x_1,\dots,x_s)$ by 
$f_k=f(x_1,\dots,x_k,0,\dots,0)$ is no greater than the error demand 
$\varepsilon$ (see Definition~\ref{def:td} for more). We denote 
this minimal number $k$ by
${\rm dim^{trnc}}(\varepsilon)$.

Note that if the truncation dimension is small, say ${\rm dim^{trnc}}(\varepsilon)=3$, then the
$s$-variate approximation problem can be replaced by the much easier 
${\rm dim^{trnc}}(\varepsilon)$-variate one, and any efficient algorithm
for dealing with functions of only very few variables becomes also
efficient for functions of $s$ variables. 

The main result of this paper is the observation that the 
$\varepsilon$-truncation dimension is 
surprisingly small for modest error 
demand $\varepsilon$ and the weights decaying sufficiently fast. 
For instance, for product weights
\[
  \gamma_\setu\,=\,\prod_{j\in\setu}j^{-a}
\]
and the parameters $p=q=2$, we have the following upper bounds 
$k(\varepsilon)$ on ${\rm dim^{trnc}}(\varepsilon)$  for $a=3,4,5$: 
\[
\begin{array}{c||c|c|c|c|c|c||l}
\varepsilon &  10^{-1}  &  10^{-2}  & 10^{-3}  & 10^{-4}  & 10^{-5}  &  10^{-6} & \\
\hline
k(\varepsilon)& 2      &  5       &  12      & 31     & 79      & 198
&a=3\\
\hline
k(\varepsilon)& 2      &  3        &  6     & 11     &  22     &  42
&a=4\\
\hline
k(\varepsilon)& 1      &  2        &  4     & 6     &  11     &  18
&a=5
\end{array}
\]

We stress that our definition of truncation dimension is different
from the one proposed in statistical literature. There the dimension
depends on a particular function via its ANOVA decomposition 
which, in general, cannot be computed. Moreover, for functions from 
spaces with ANOVA decomposition, small truncation dimension cannot be 
utilized unless the weights $\gamma_\setu$ are such that the anchored
and ANOVA norms are equivalent. Such equivalence has recently been 
studied in \cite{GnHeHiRiWa16,HeRi13,HeRiWa15,SH,KrPiWa2} and, 
in particular, there is an equivalence independent of $s$ for product 
weights that are summable, with the equivalence constant bounded
from above by 
\[
  \sum_\setu\gamma_\setu<\infty.
\]
For product weights mentioned above we have 
\[
\sum_\setu\gamma_\setu\,=\,\prod_{j=1}^\infty(1+j^{-a})\,<\,\infty
\]
and hence the corresponding efficient algorithms for anchored spaces
can also be used efficiently for ANOVA spaces. 

The paper is structured as follows. In Section 
\ref{defspace}, we define the anchored spaces $F_{s,p,\bsgamma}$, 
and in Section \ref{sec:problem} we outline the problem setting. 
We give our results for anchored spaces in Section \ref{sec:anchored}, 
and discuss examples of special kinds of weights in Sections 
\ref{sec:product} and \ref{sec:pod}.

\section{Basic Concepts}

\subsection{Anchored Spaces}\label{defspace}
In this section, we briefly recall definitions and basic properties
of $\bsgamma$-weighted anchored 
Sobolev spaces of $s$-variate functions. More
detailed information can be found in \cite{HeRiWa15,SH,Was14}.

Here we follow \cite[Section~2]{Was14}: For $p \in [1,\infty]$
let $F=W_{p,0}^1([0,1])$ be the space of functions defined on $[0,1]$
that vanish at zero, are absolutely continuous, and have bounded
derivative in the $L_p$ norm. We endow $F$ with the norm
$\|f\|_F=\|f'\|_{L_p}$ for $f \in F$.

For $s \in \mathbb{N}$ and
\[
  [s]\,:=\,\{1,2,\dots,s\},
\]
we will use $\setu,\setv$ to denote
subsets of $[s]$, i.e.,
\[
  \setu,\setv\subseteq\,[s].
\] 
Moreover, for $\bsx=(x_1,x_2,\ldots,x_s)\in[0,1]^s$ and
$\setu\subseteq [s]$, $[\bsx_\setu;\bszero_{-\setu}]$ denotes the
$s$-dimensional vector with all $x_j$ for $j\notin\setu$ replaced
by zero, i.e.,
\[
  [\bsx_\setu;\bszero_{-\setu}]\,=\,(y_1,y_2,\dots,y_s)\quad
   \mbox{with}\quad  y_j\,=\,\left\{\begin{array}{ll} x_j &
    \mbox{if\ }j\in\setu,\\ 0 & \mbox{if\ } j\notin\setu.\end{array}
    \right.
\]
We also write $\bsx_\setu$ to denote the $|\setu|$-dimensional
vector $(x_j)_{j\in\setu}$ and
\[
  f^{(\setu)}\,=\,\frac{\partial^{|\setu|} f}{\partial \bsx_{\setu}}\,
=\,\prod_{j\in\setu}\frac{\partial}{\partial x_j}\,f
  \quad\mbox{with}\quad f^{(\emptyset)}=f.
\]

For $s \in \mathbb{N}$ and nonempty $\setu \subseteq [s]$ let $F_{\setu}$
be the completion of the space spanned by
$f(\bsx)=\prod_{j \in \setu} f_j(x_j)$ for $f_j \in F$
and $\bsx=(x_1,\ldots,x_s) \in [0,1]^s$, with the norm
$$\|f\|_{F_{\setu}}=\|f^{(\setu)}\|_{L_p}.$$
Note that $F_{\setu}$ is a
space of functions with domain $[0,1]^s$ that depend only on the
variables listed in $\setu$. 
Moreover, for any $f\in F_\setu$ and $\bsx=(x_1,\ldots,x_s)$, $f(\bsx)=0$ if 
$x_j=0$ for some $j\in\setu$. 
For $\setu=\emptyset$, let $F_{\setu}$ be the
space of constant functions with the natural norm.

Consider next a sequence $\bsgamma=(\gamma_\setu)_{\setu\subseteq[s]}$
of non-negative real numbers, called {\em weights}. Since some
weights could be zero, we will use
\[
  \setU\,=\,\{\setu\subseteq [s]\,:\,\gamma_\setu>0\}
\]
to denote the collection of positive weights. For $p\in[1,\infty]$,
we define  
the corresponding weighted {\em anchored} space 
\[
F_{s,p,\bsgamma}\,=\, {\rm span} \left(\bigcup_{\setu \in \setU} F_{\setu}\right)
\]
with the norm 
\[
  \|f\|_{F_{s,p,\bsgamma}}\,=\,\left\{
\begin{array}{ll}
 \left(\sum_{\setu\in\setU}\frac{1}{\gamma_\setu^p}\,
    \|f^{(\setu)}([\cdot_\setu;\bszero_{-\setu}])\|_{L_p}^p
   \right)^{1/p} & \mbox{ if $p < \infty$,}\\[0.8em]
 \max_{\setu\in\setU}\frac1{\gamma_\setu}\,
   \mathrm{ess\sup}_{\bsx_\setu\in[0,1]^{|\setu|}}
   |f^{(\setu)}([\bsx_\setu;\bszero_{-\setu}])| &
   \mbox{ if $p = \infty$.}
\end{array}\right.
\]

\begin{remark}
Some of the results of this paper can be extended to spaces of
functions with countably many variables. In such cases, $[s]=\NN$,
the sets $\setu$ are finite subsets of $\NN$, and
$\bsx=(x_j)_{j\in\NN}$ with $x_j\in[0,1]$. Moreover, the anchored
space is the completion of ${\rm span}\left(\bigcup_\setu F_\setu\right)$
with respect to the norm given above. 
\end{remark}

An important class of weights is provided by product weights
\[
  \gamma_\setu\,=\,\prod_{j\in\setu}\gamma_j
\]
for positive reals $\gamma_j$. When dealing with them, we will assume
without any loss of generality 
that
\[
  \gamma_j\,\ge\,\gamma_{j+1}\,>\,0\quad\mbox{for all\ }j.
\]
Note that for product weights we have
$\setU=2^{[s]}=\left\{\setu: \setu\subseteq [s]\right\}$.

For $p=2$, $F_{s,2,\bsgamma}$ is a reproducing kernel Hilbert
space with kernel
\[
  K(\bsx,\bsy)\,=\,\sum_{\setu\in\setU}\gamma_\setu^{2}\prod_{j\in\setu}
  \min(x_j,y_j),
\] 
for $\bsx=(x_1,\ldots,x_s)$ and analogously for $\bsy$, which for
product weights reduces to
\[
  K(\bsx,\bsy)\,=\,\prod_{j=1}^s\left(1+\gamma_j^2\,\min(x_j,y_j)\right).
\]

\subsection{The Function Approximation Problem}\label{sec:problem}
 
We follow \cite{Was14}. Let $q\in [1,\infty]$. For $\setu\in\setU$, let 
$S_\setu:F_\setu\rightarrow L_q ([0,1]^{\left\vert\setu\right\vert})$
be the embedding operator, 
\[
  S_\setu (f_\setu)=f_\setu\ \ \ \mbox{ for all $f_\setu\in F_\setu$.}
\]
It is well known that 
\[
  \left\Vert S_\setu\right\Vert=\left\Vert S\right\Vert^{|\setu|}
\]
for the space $F_{\setu}$, where $S:F \rightarrow L_q([0,1])$
is the univariate embedding operator. 
 
Let further $\mathcal{L}_q$ be a normed linear space 
such that $F_{s,p,\bsgamma}$ is its subspace and the norm
$\left\Vert\cdot\right\Vert_{\mathcal{L}_q}$ is such that
\[
\left\Vert f_{\setu}\right\Vert_{\mathcal{L}_q}=
\left\Vert f_{\setu}\right\Vert_{L_q
  ([0,1]^{\left\vert\setu\right\vert})} 
\quad\mbox{for all\ }
f_\setu\in F_\setu.
\]
Denote by $S_s$ the embedding operator
\[
S_s: F_{s,p,\bsgamma}\rightarrow \mathcal{L}_q,\quad S_s(f)\,=\,f.
\]

In order to make sure that $S_s$ is continuous, we assume from now on that 
\[
\sum_{\setu \in\setU} \left(\left(\frac{q}{p^{\ast}}
+1\right)^{-\left\vert\setu\right\vert/q}\gamma_{\setu}\right)^{p^{\ast}}<\infty,
\]
where here and throughout this paper $p^*$ denotes the conjugate
of $p$, i.e., $1/p+1/p^*=1$. To see that this condition indeed
ensures continuity of $S_s$, 
we recall the following proposition
from \cite{Was14}. 

\begin{proposition}\label{propgreg}
We have
\[\left\Vert S\right\Vert\le \left(\frac{q}{p^*} +1\right)^{-1/q}
\quad\mbox{and}\quad
\left\Vert S_s\right\Vert\le \left[\sum_{\setu\in\setU}
  \left(\left(\frac{q}{p^{\ast}}+1\right)^{-\left\vert\setu\right\vert/q}
  \,\gamma_{\setu}\right)^{p^*}\right]^{1/p^{\ast}}.
\]
\end{proposition}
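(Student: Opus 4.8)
The plan is to prove the univariate bound first and then lift it to the multivariate operator via the anchored decomposition together with Hölder's inequality. First I would treat $\|S\|$. For $f\in F$ we have $f(0)=0$ and, by absolute continuity, $f(x)=\int_0^x f'(t)\rd t$. Applying Hölder's inequality on $[0,x]$ with exponents $p$ and $p^*$ gives the pointwise bound $|f(x)|\le \|f'\|_{L_p}\,x^{1/p^*}$. Raising this to the $q$-th power and integrating over $[0,1]$ yields
\[
\|f\|_{L_q}^q\,\le\,\|f'\|_{L_p}^q\int_0^1 x^{q/p^*}\rd x
\,=\,\|f'\|_{L_p}^q\left(\frac{q}{p^*}+1\right)^{-1},
\]
so that $\|Sf\|_{L_q}=\|f\|_{L_q}\le\left(\frac{q}{p^*}+1\right)^{-1/q}\|f\|_F$, which is the first assertion. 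The endpoint cases $p=\infty$ (so $p^*=1$) and $q=\infty$ (where the integral is replaced by an essential supremum and $\left(\frac{q}{p^*}+1\right)^{-1/q}$ is read as its limiting value $1$) follow by the obvious modifications.

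Next, for $\|S_s\|$, I would invoke the anchored decomposition $f=\sum_{\setu\in\setU}f_\setu$ with $f_\setu\in F_\setu$ satisfying $f_\setu^{(\setu)}(\bsx_\setu)=f^{(\setu)}([\bsx_\setu;\bszero_{-\setu}])$. Because $f_\setu$ depends only on the variables in $\setu$, we have $\|f_\setu\|_{F_\setu}=\|f^{(\setu)}([\cdot_\setu;\bszero_{-\setu}])\|_{L_p}$, and hence the norm factors as $\|f\|_{F_{s,p,\bsgamma}}^p=\sum_{\setu\in\setU}(\|f_\setu\|_{F_\setu}/\gamma_\setu)^p$. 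Using the triangle inequality in $\mathcal{L}_q$, the compatibility $\|f_\setu\|_{\mathcal{L}_q}=\|f_\setu\|_{L_q}$, and the identity $\|S_\setu\|=\|S\|^{|\setu|}$, I obtain
\[
\|S_s f\|_{\mathcal{L}_q}
\,=\,\Bigl\|\sum_{\setu\in\setU}f_\setu\Bigr\|_{\mathcal{L}_q}
\,\le\,\sum_{\setu\in\setU}\|S_\setu f_\setu\|_{L_q}
\,\le\,\sum_{\setu\in\setU}\bigl(\|S\|^{|\setu|}\gamma_\setu\bigr)\cdot\frac{\|f_\setu\|_{F_\setu}}{\gamma_\setu}.
\]
Applying Hölder's inequality to the final sum with exponents $p^*$ and $p$ bounds the right-hand side by $\bigl(\sum_{\setu\in\setU}(\|S\|^{|\setu|}\gamma_\setu)^{p^*}\bigr)^{1/p^*}\,\|f\|_{F_{s,p,\bsgamma}}$. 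Substituting the univariate bound $\|S\|\le\left(\frac{q}{p^*}+1\right)^{-1/q}$, so that $\|S\|^{|\setu|}\le\left(\frac{q}{p^*}+1\right)^{-|\setu|/q}$, gives exactly the claimed estimate for $\|S_s\|$.

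The main obstacle I anticipate is justifying the anchored decomposition cleanly: that each $f_\setu$ indeed lies in $F_\setu$ with the stated derivative identity, that the resulting series converges in $\mathcal{L}_q$ (which is precisely where the triangle-inequality step and the assumed finiteness of the bounding sum are used), and that the norm splits as above. These are standard facts for anchored spaces and I would cite \cite{Was14,HeRiWa15,SH} rather than reprove them. The only remaining care concerns the endpoint exponents, where the Hölder pairing degenerates into a supremum-versus-sum estimate ($p=\infty$) or the $L_q$ integral is replaced by an essential supremum ($q=\infty$); in both situations the same chain of inequalities goes through with the conventions noted above.
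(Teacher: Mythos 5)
Your proof is correct. Note that the paper does not prove Proposition \ref{propgreg} at all---it is quoted from \cite{Was14} without proof---so there is no in-paper argument to compare against; however, your two steps (the pointwise bound $|f(x)|\le\|f'\|_{L_p}x^{1/p^*}$ for the univariate embedding, followed by the anchored decomposition, the triangle inequality in $\mathcal{L}_q$, the factorization $\|S_\setu\|=\|S\|^{|\setu|}$, and H\"older with exponents $p$ and $p^*$ over the sets $\setu$) are exactly the standard route, and the second step is the same device the authors themselves deploy inside the proof of Theorem \ref{thmtrunc} for the tail sum over $\setu\not\subseteq[k]$. Your handling of the endpoint cases and your deferral of the anchored-decomposition facts to the cited references are both appropriate.
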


Note that for product weights we have
\[
\sum_{\setu\in\setU} \left(\left(\frac{q}{p^{\ast}}
+1\right)^{-\left\vert\setu\right\vert/q}\gamma_{\setu}\right)^{p^*}\, =\,
\prod_{j=1}^s\left(1+\gamma_j^{p^*} \left(\frac{q}{p^*}
+1\right)^{-p^*/q}\right).
\]

We are interested in algorithms for approximating 
$f\in F_{s,p,\bsgamma}$. In this paper we only consider algorithms 
that use so-called
{\it standard information}, i.e., which only use function
evaluations as the allowed information class, and which have the form
\[
A_{s}(f) = \phi(f(\bsx_1), \ldots, f(\bsx_n))
\]
for $\bsx_j \in [0,1]^s$, where
$\phi:\mathbb{R}^n \rightarrow \mathcal{L}_q$.
An important class of algorithms is
provided by the class of linear algorithms which are of the form
$$A_{s}(f) = \sum_{j=1}^n f(\bsx_j)  g_j$$ 
for $g_j \in F_{s,p,\bsgamma}$.
We study the worst case setting, where the error of an algorithm
$A_{s}$ is the operator norm of $S_s-A_{s}$, i.e.,
\[
e(A_{s};F_{s,p,\bsgamma})\,:=\,\|S_s-A_{s}\|\,=\,
\sup_{\|f\|_{F_{s,p,\bsgamma} \le 1}}\|f-A_{s}(f)\|_{\mathcal{L}_q}.
\]

\section{Anchored Decomposition and Truncation Dimension}\label{sec:anchored}
It is well known, see, e.g., \cite{KSWW10a}, that any $f\in
F_{s,p,\bsgamma}$ has the unique {\em anchored decomposition}
\begin{equation}\label{dec-a}
  f\,=\,\sum_{\setu\in\setU} f_\setu,
\end{equation}
where $f_\setu$ is an element of $F_\setu$,
depends only on $x_j$ for $j\in\setu$, and
\begin{equation}\label{anc}
   f_\setu(\bsx)\,=\,0\quad\mbox{if\ }x_j=0\ \mbox{\ for some\ }\ j\in\setu.
\end{equation}
For the empty set $\setu$, $f_\emptyset$ is a constant function.
We stress that in general we do not know what the elements
$f_\setu$ are and algorithms are only allowed to evaluate
the original function $f$.

The anchored  decomposition has the following important
properties, see, e.g.,~\cite{HeRiWa15}:
\begin{equation}\label{der}
   f^{(\setu)}([\cdot_\setu;\bszero_{-\setu}])\,\equiv\,f_{\setu}^{(\setu)},
\end{equation}
\[
f_\setu\,\equiv\,0\quad\mbox{iff}\quad
   f^{(\setu)}([\cdot_\setu;\bszero_{-\setu}])\,\equiv\,0,
\]
\[
   \|f\|_{F_{s,p,\bsgamma}}\, = \, \left\| \sum_{\setu\in\setU}
     f_\setu \right\|_{F_{s,p,\bsgamma}} \,=\,
  \left(\sum_{\setu\in\setU}\gamma_\setu^{-p}\,
    \|f_\setu^{(\setu)}\|_{L_p}^p\right)^{1/p}\quad\mbox{for\ }
    p<\infty, 
\]
and
\[
  \|f\|_{F_{s,\infty,\bsgamma}}\,=\,\max_{\setu\in\setU}
   \frac{\|f_\setu^{(\setu)}\|_{L_\infty}}{\gamma_\setu}
   \quad\mbox{for\ }p=\infty.
\]
For any $\setu\not=\emptyset$, there exists (unique in the $L_p$-sense)
$g \in L_p([0,1]^{|\setu|})$ such that
\[
f_\setu(\bsx)\,=\,\int_{[0,1]^{|\setu|}}g(\bst)\,
\prod_{j\in\setu} 1_{[0,x_j)}(t_j)
  \rd \bst\quad\mbox{and}\quad
  f_\setu^{(\setu)}\,=\,g,
\]
where $1_J(t)$ is the characteristic function of the set $J$,
i.e., $1_J(t)=1$ if $t \in J$ and 0 otherwise.

Moreover, for any $\setu$,
\[
  f([\cdot_\setu;\bszero_{-\setu}])\,=\,\sum_{\setv\subseteq\setu}f_\setv.
\]
In particular, for $k<s$ we have
\begin{equation}\label{trnk}
  f([\bsx_{[k]};\bszero_{-[k]}])\,=\,f(x_1,\dots,x_k,0,\dots,0)
  \,=\,\sum_{\setv\subseteq[k]}f_\setv(\bsx)
\end{equation}
which allows us to compute samples and approximate the
{\em truncated} function
\[
  f_k(x_1,\dots,x_k)\,:=\,\sum_{\setv\subseteq[k]}f_\setv(\bsx).
\]
Moreover, $f_k\in F_{k,p,\bsgamma}\subseteq F_{s,p,\bsgamma}$ and
\[
\|f_k\|_{F_{k,p,\bsgamma}}\,=\,
\|f([\cdot_{[k]};\bszero_{-[k]}])\|_{F_{s,p,\bsgamma}}\,=\,
\left\|\sum_{\setu\subseteq[k]}f_\setu
   \right\|_{F_{s,p,\bsgamma}}.
\]
This leads to the following concept.

\begin{definition}\rm \label{def:td}
For a given error demand $\varepsilon>0$, by
$\varepsilon$-{\em truncation dimension for the approximation problem}
(or {\em truncation dimension} for short), denoted by
${\rm dim^{trnc}}(\varepsilon)$,
we mean the smallest integer $k$ such that 
\[
\left\|\sum_{\setu\not\subseteq[k]}f_\setu\right\|_{\mathcal{L}_q}\,
\le\,\varepsilon\,\left\|\sum_{\setu\not\subseteq[k]}f_\setu
\right\|_{F_{s,p,\bsgamma}}\quad\mbox{for all\ }f\,=\,\sum_{\setu\in\setU}f_\setu\,\in\,F_{s,p,\bsgamma}. 
\]   
\end{definition}

We have the following upper bound on the truncation dimension.

\begin{theorem}\label{thm:dim}
We have
\[
  {\rm dim^{trnc}}(\varepsilon)\,\le\,
  \min\left\{k\,:\,\left(\sum_{\setu\not\subseteq[k]}
  \frac{\gamma_\setu^{p^*}}{(\frac{q}{p^*}+1)^{p^*\,|\setu|/q}}
  \right)^{1/p^*}\,\le\,\varepsilon\right\}\quad\mbox{for\ }p>1,
\]
and 
\[
{\rm dim^{trnc}}(\varepsilon)\,=\,
\min\left\{k\,:\,\max_{\setu\not\subseteq[k]}\gamma_\setu\,\le\,\varepsilon \right\}\quad\mbox{for\ }p=1.
\] 
Here $\sum_{\setu\not\subseteq[k]}$ means summation over all $\setu
\subseteq [s]$ with $\setu\not\subseteq[k]$ and similarly for
$\max_{\setu\not\subseteq[k]}$.
\end{theorem}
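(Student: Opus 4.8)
The plan is to recognize the quantity controlling ${\rm dim^{trnc}}(\varepsilon)$ as the operator norm of $S_s$ restricted to the subspace of tail functions, and to estimate it by re-running the argument behind Proposition~\ref{propgreg} over the index set $\{\setu:\setu\not\subseteq[k]\}$. Fix $k$ and set $g:=\sum_{\setu\not\subseteq[k]}f_\setu$; Definition~\ref{def:td} asks precisely that $\|g\|_{\mathcal{L}_q}\le\varepsilon\,\|g\|_{F_{s,p,\bsgamma}}$ for every $f$, i.e.\ for every admissible tail $g$. For $p>1$ I would start from the triangle inequality $\|g\|_{\mathcal{L}_q}\le\sum_{\setu\not\subseteq[k]}\|f_\setu\|_{\mathcal{L}_q}$, use $\|f_\setu\|_{\mathcal{L}_q}=\|S_\setu f_\setu\|_{L_q}\le\|S\|^{|\setu|}\,\|f_\setu^{(\setu)}\|_{L_p}$ together with the bound $\|S\|\le(\frac{q}{p^*}+1)^{-1/q}$ from Proposition~\ref{propgreg}, and then apply Hölder's inequality with exponents $p^*$ and $p$ to the products $\big[(\frac{q}{p^*}+1)^{-|\setu|/q}\gamma_\setu\big]\cdot\big[\gamma_\setu^{-1}\|f_\setu^{(\setu)}\|_{L_p}\big]$. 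This factors the estimate into exactly the claimed constant times $\big(\sum_{\setu\not\subseteq[k]}\gamma_\setu^{-p}\|f_\setu^{(\setu)}\|_{L_p}^p\big)^{1/p}=\|g\|_{F_{s,p,\bsgamma}}$, and the standing summability assumption makes the tail sum finite.

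For $p=1$ the identical chain with $p^*=\infty$ (so $\|S\|\le1$, and Hölder degenerates to the $\ell_1$--$\ell_\infty$ bound) gives $\|g\|_{\mathcal{L}_q}\le(\max_{\setu\not\subseteq[k]}\gamma_\setu)\,\|g\|_{F_{s,1,\bsgamma}}$, which is the ``$\le$'' direction of the asserted equality. To obtain equality I would prove the reverse inequality by exhibiting near-extremal test functions: pick $\setu^*\not\subseteq[k]$ with $\gamma_{\setu^*}$ arbitrarily close to $\max_{\setu\not\subseteq[k]}\gamma_\setu$ and take the single-component function $g=\prod_{j\in\setu^*}h_n(x_j)$ with $h_n(x)=\min(nx,1)$. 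Then $\|g^{(\setu^*)}\|_{L_1}=\prod_{j\in\setu^*}\|h_n'\|_{L_1}=1$, so $\|g\|_{F_{s,1,\bsgamma}}=\gamma_{\setu^*}^{-1}$, while $h_n\to1$ in $L_q$ forces $\|g\|_{\mathcal{L}_q}=\prod_{j\in\setu^*}\|h_n\|_{L_q}\to1$. Hence the ratio $\|g\|_{\mathcal{L}_q}/\|g\|_{F_{s,1,\bsgamma}}=\gamma_{\setu^*}\|g\|_{L_q}\to\gamma_{\setu^*}$, so whenever $\max_{\setu\not\subseteq[k]}\gamma_\setu>\varepsilon$ the truncation inequality fails for large $n$, forcing ${\rm dim^{trnc}}(\varepsilon)>k$ and yielding the claimed equality.

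The upper bounds for both regimes are routine once the triangle inequality, the univariate norm $\|S\|$, and Hölder are assembled, so I expect the only genuinely delicate step to be the sharpness argument for $p=1$: one must check that the tensor-product test functions simultaneously pin the derivative norm at $\|g^{(\setu^*)}\|_{L_1}=1$ and drive $\|g\|_{L_q}$ up to its supremal value $1$, and one must allow for $\max_{\setu\not\subseteq[k]}\gamma_\setu$ to be an unattained supremum by merely selecting a $\setu^*$ whose weight already exceeds $\varepsilon$.
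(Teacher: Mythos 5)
Your upper-bound argument is exactly the paper's: Theorem~\ref{thm:dim} is proved inside the proof of Theorem~\ref{thmtrunc}, where the tail term $\sum_{\setu\not\subseteq[k]}\|S_\setu(f_\setu)\|_{\mathcal{L}_q}$ is estimated via $\|S_\setu\|=\|S\|^{|\setu|}\le(\frac{q}{p^*}+1)^{-|\setu|/q}$ from Proposition~\ref{propgreg} and H\"older with exponents $p$ and $p^*$ applied to the factors $\gamma_\setu^{-1}\|f_\setu\|_{F_\setu}$ and $\gamma_\setu\|S_\setu\|$ --- precisely your splitting, with the second H\"older factor collapsing to $\|\sum_{\setu\not\subseteq[k]}f_\setu\|_{F_{s,p,\bsgamma}}$ by the decomposition formula for the norm. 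Where you go beyond the paper is the $p=1$ case: the paper's argument (and its remark that ``the proof for $p=1$ is very similar'') only delivers the inequality ${\rm dim^{trnc}}(\varepsilon)\le\min\{k:\max_{\setu\not\subseteq[k]}\gamma_\setu\le\varepsilon\}$ and never spells out the reverse direction needed for the asserted equality. Your test functions $g=\prod_{j\in\setu^*}\min(nx_j,1)$ fill that gap correctly: $g$ lies in $F_{\setu^*}$ and is its own single anchored component, so it is an admissible tail when $\setu^*\not\subseteq[k]$; it pins $\|g^{(\setu^*)}\|_{L_1}=1$ while $\|g\|_{L_q}\to1$, so the ratio tends to $\gamma_{\setu^*}$, and your choice of $\setu^*$ with $\gamma_{\setu^*}>\varepsilon$ handles a possibly unattained supremum when $s=\infty$. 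In short, the proposal is correct, matches the paper on the bounds, and supplies a sharpness argument the paper leaves implicit.
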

The proof of the theorem is in the proof of the next, Theorem \ref{thmtrunc}.

For given $k<s$, let $A_{k}$ be an algorithm for approximating
functions from the space $F_{k,p,\bsgamma}$. We use it to
define the following approximation algorithms for the original space
$F_{s,p,\bsgamma}$,
\begin{equation}\label{trnk-q}
  A^{\rm trnc}_{s,k}(f)\,=\,A_{k}(f([\cdot_{[k]};\bszero_{-[k]}]))
  \,=\,A_{k}(f_k).
\end{equation}
Clearly, the algorithms $A^{\rm trnc}_{s,k}$ are well defined.

We have the following result.

\begin{theorem}\label{thmtrunc}
For every  $k<s$, 
the worst case error of $A^{\rm trnc}_{s,k}$ is bounded  by
\[
  e(A^{\rm trnc}_{s,k};F_{s,p,\bsgamma})\,\le\,
  \left( [e(A_{k};F_{k,p,\bsgamma})]^{p^*}  +
  \sum_{\setu\not\subseteq[k]}
\frac{\gamma_\setu^{p^*}\,}{(\frac{q}{p^*}+1)^{p^* |\setu|/q}} \right)^{1/p^*}
\quad\mbox{for\ }p>1
\]
and by
\[
      e(A^{\rm trnc}_{s,k};F_{s,1,\bsgamma})\,\le\,
   \max\left(e(A_{k};F_{k,1,\bsgamma})\,,\,
     \max_{\setu\not\subseteq[k]}\gamma_\setu\right) \quad\mbox{for\ }p=1.
\]

Moreover, if $k\ge{\rm dim^{trnc}}(\varepsilon)$ then
\[
e(A_{s,k}^{\rm trnc};F_{s,p,\bsgamma})\,\le\,
\left([e(A_{k};F_{k,p,\bsgamma})]^{p^*}+\varepsilon^{p^*}
\right)^{1/p^*}.
\]
\end{theorem}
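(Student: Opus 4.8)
The plan is to split every $f=\sum_{\setu\in\setU}f_\setu$ into its truncated part $f_k=\sum_{\setv\subseteq[k]}f_\setv$ and its tail $R_k f:=\sum_{\setu\not\subseteq[k]}f_\setu$, and to exploit that the anchored norm decomposes additively across these two disjoint index groups. Since $\|f\|_{F_{s,p,\bsgamma}}^p=\sum_{\setu\in\setU}\gamma_\setu^{-p}\|f_\setu^{(\setu)}\|_{L_p}^p$ and $\{\setu\in\setU\}$ splits disjointly into $\{\setu\subseteq[k]\}$ and $\{\setu\not\subseteq[k]\}$, I would first record
\[
\|f\|_{F_{s,p,\bsgamma}}^p=\|f_k\|_{F_{k,p,\bsgamma}}^p+\|R_k f\|_{F_{s,p,\bsgamma}}^p\qquad(1\le p<\infty),
\]
with the obvious $\max$-version for $p=\infty$; here I use the identity $\|f_k\|_{F_{k,p,\bsgamma}}=\|f([\cdot_{[k]};\bszero_{-[k]}])\|_{F_{s,p,\bsgamma}}$ already recorded before Definition~\ref{def:td}. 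I would also introduce $\sigma_k$ for the operator norm of $S_s$ restricted to the tail subspace $\mathrm{span}\{F_\setu:\setu\not\subseteq[k]\}$, so that by definition $\|R_k f\|_{\mathcal{L}_q}\le\sigma_k\|R_k f\|_{F_{s,p,\bsgamma}}$ for every $f$.

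Next, since $A^{\rm trnc}_{s,k}(f)=A_k(f_k)$ and $f-f_k=R_k f$, the triangle inequality gives, for every $f$,
\[
\|f-A^{\rm trnc}_{s,k}(f)\|_{\mathcal{L}_q}\le\|f_k-A_k(f_k)\|_{\mathcal{L}_q}+\|R_k f\|_{\mathcal{L}_q}\le e(A_k;F_{k,p,\bsgamma})\,\|f_k\|_{F_{k,p,\bsgamma}}+\sigma_k\,\|R_k f\|_{F_{s,p,\bsgamma}},
\]
where the first summand uses the definition of the worst case error of $A_k$ on $F_{k,p,\bsgamma}$. Writing $a=\|f_k\|_{F_{k,p,\bsgamma}}$ and $b=\|R_k f\|_{F_{s,p,\bsgamma}}$, the norm additivity above shows that $\|f\|_{F_{s,p,\bsgamma}}\le1$ forces $a^p+b^p\le1$. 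Taking the supremum over all such $f$ is therefore bounded by the maximum of the linear functional $e(A_k)\,a+\sigma_k\,b$ over the $\ell_p$-ball $\{a,b\ge0:\ a^p+b^p\le1\}$; note this is an immediate inequality (each $f$ yields an admissible pair $(a,b)$), so no independent prescription of $a,b$ is needed. This is precisely the dual-norm computation, whose value is $(e(A_k)^{p^*}+\sigma_k^{p^*})^{1/p^*}$ for $1<p<\infty$, and $\max(e(A_k),\sigma_k)$ in the limiting case $p=1$ (where $p^*=\infty$).

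To make the bound explicit I would apply Proposition~\ref{propgreg} to the tail subspace, i.e.\ to the weight sequence obtained by zeroing out every $\gamma_\setu$ with $\setu\subseteq[k]$; this yields $\sigma_k\le\big(\sum_{\setu\not\subseteq[k]}\gamma_\setu^{p^*}(\tfrac{q}{p^*}+1)^{-p^*|\setu|/q}\big)^{1/p^*}$ for $p>1$, and $\sigma_k\le\max_{\setu\not\subseteq[k]}\gamma_\setu$ for $p=1$ (using $\|S\|\le1$ there), and substituting proves the two main inequalities. For the ``Moreover'' statement I would observe that the tail index set $\{\setu\not\subseteq[k]\}$ shrinks as $k$ grows, so $\sigma_k$ is nonincreasing in $k$, and that $\sigma_k$ is exactly the best constant in the inequality defining ${\rm dim^{trnc}}(\varepsilon)$ in Definition~\ref{def:td}. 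Hence $k\ge{\rm dim^{trnc}}(\varepsilon)$ gives $\sigma_k\le\varepsilon$, and inserting this into $e(A^{\rm trnc}_{s,k})\le(e(A_k)^{p^*}+\sigma_k^{p^*})^{1/p^*}$ finishes the proof. The same identity $\sigma_k\le\tau_k:=\big(\sum_{\setu\not\subseteq[k]}\gamma_\setu^{p^*}(\tfrac{q}{p^*}+1)^{-p^*|\setu|/q}\big)^{1/p^*}$ shows that $\tau_k\le\varepsilon$ already implies $\sigma_k\le\varepsilon$, which simultaneously proves Theorem~\ref{thm:dim}.

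The main obstacle I anticipate is not any single computation but the two structural claims about $\sigma_k$ needed for the ``Moreover'' part: that the worst case ratio in Definition~\ref{def:td} is governed \emph{exactly} by the restricted embedding norm $\sigma_k$ (so that $k\ge{\rm dim^{trnc}}(\varepsilon)$ genuinely yields $\sigma_k\le\varepsilon$), and that $\sigma_k$ is monotone nonincreasing in $k$ so the bound persists for all $k$ beyond the truncation dimension. Both follow from the disjoint-support structure of the anchored decomposition together with the additivity of the norm, but they are the points where care is required; the triangle inequality, the dual-norm inequality, and the appeal to Proposition~\ref{propgreg} are otherwise routine.
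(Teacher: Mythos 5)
Your proof is correct and follows essentially the same route as the paper's: a triangle inequality separating the error of $A_k$ on the truncated part from the embedding of the tail $\sum_{\setu\not\subseteq[k]}f_\setu$, followed by H\"older/duality applied first to the tail embedding (Proposition~\ref{propgreg}) and then to the two-term sum. Your packaging via the restricted embedding norm $\sigma_k$ is merely a reorganization of the paper's two explicit H\"older steps, and it has the minor advantage of making the ``Moreover'' clause and Theorem~\ref{thm:dim} (whose proofs the paper leaves implicit) come out transparently.
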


\begin{proof}
We prove the theorem for $p>1$ only since the proof for $p=1$ is
very similar. Let us first assume that $1<p<\infty$. For any
$f\in F_{s,p,\bsgamma}$ it holds that
\begin{eqnarray*}
   \left\Vert S_s(f)-A^{\rm trnc}_{s,k}(f)\right\Vert_{\mathcal{L}_q}&=&
   \left\Vert S_s(f)-A_{k}(f_k)\right\Vert_{\mathcal{L}_q}\\
   &=&\left\Vert S_k(f_k) -A_{k}(f_k) +
   \sum_{\setu\not\subseteq[k]} S_{\setu}(f_\setu)\right\Vert_{\mathcal{L}_q}\\
  &\le& e(A_{k};F_{k,p,\bsgamma})\,\|f_k\|_{F_{k,p,\bsgamma}}+
   \sum_{\setu\not\subseteq[k]} \left\Vert S_{\setu} (f_\setu)
   \right\Vert_{\mathcal{L}_q}\\
   &\le& e(A_{k};F_{k,p,\bsgamma})\,\|f_k\|_{F_{k,p,\bsgamma}}+
   \sum_{\setu\not\subseteq[k]} \left\Vert
     f_\setu\right\Vert_{F_\setu}
  \left\Vert S_\setu\right\Vert.
\end{eqnarray*}
We now have
\begin{eqnarray*}
 \sum_{\setu\not\subseteq[k]} \left\Vert
   f_\setu\right\Vert_{F_\setu}\left\Vert S_\setu\right\Vert&=&
 \sum_{\setu\not\subseteq[k]} \gamma_\setu^{-1}\left\Vert f_\setu\right\Vert_{F_\setu} \gamma_\setu \left\Vert S_\setu\right\Vert\\
 &\le & \left[\sum_{\setu\not\subseteq[k]} \left(\frac{\left\Vert f_\setu\right\Vert_{F_\setu}}{\gamma_\setu}\right)^p\right]^{1/p}
 \left[\sum_{\setu\not\subseteq[k]}(\gamma_\setu \left\Vert S_\setu\right\Vert)^{p^*}\right]^{1/p^*}\\
 &\le & \left[\sum_{\setu\not\subseteq[k]} \left(\frac{\left\Vert f_\setu\right\Vert_{F_\setu}}{\gamma_\setu}\right)^p\right]^{1/p}
 \left[\sum_{\setu\not\subseteq[k]}\left(\gamma_\setu \left(\frac{q}{p^*}+1\right)^{-\left\vert\setu\right\vert/q}\right)^{p^*}\right]^{1/p^*},
\end{eqnarray*}
where we used Proposition \ref{propgreg} in the last step. 

Hence, putting together, we get
\begin{eqnarray*}
  \left\Vert S_s(f)-A^{\rm trnc}_{s,k}(f)\right\Vert_{\mathcal{L}_q}&\le&
  e(A_{k};F_{k,p,\bsgamma})\,\left(\sum_{\setu\subseteq[k]}\gamma_\setu^{-p}\,
   \|f_\setu^{(\setu)}\|_{L_p}^p\right)^{1/p}\\
  &&\quad +
   \left(\sum_{\setu\not\subseteq[k]}\frac{\gamma_\setu^{p^*}}{(\frac{q}{p^*}+1)^{p^* |\setu|/q}}
   \right)^{1/p^*}\,\left(\sum_{\setu\not\subseteq[k]}\gamma_\setu^{-p}\,
    \|f_\setu^{(\setu)}\|_{L_p}^p\right)^{1/p}.\\
\end{eqnarray*}
Using the H\"{o}lder inequality once more, we obtain
\begin{eqnarray*}
\lefteqn{\left\Vert S_s(f)-A^{\rm trnc}_{s,k}(f)\right\Vert_{\mathcal{L}_q}}\\
  &\le& \left(\sum_{\setu\subseteq[s]}\gamma_\setu^{-p}\,
   \|f_\setu^{(\setu)}\|_{L_p}^p\right)^{1/p}\,
   \left([e(A_{k};F_{k,p,\bsgamma})]^{p^*}+
   \sum_{\setu\not\subseteq[k]}\frac{\gamma_\setu^{p^*}}{(\frac{q}{p^*}+1)^{p^* |\setu|/q}}
   \right)^{1/p^*}\\
  & = & \|f\|_{F_{s,p,\bsgamma}}\,
   \left([e(A_{k};F_{k,p,\bsgamma})]^{p^*}+
   \sum_{\setu\not\subseteq[k]}\frac{\gamma_\setu^{p^*}}{(\frac{q}{p^*}+1)^{p^* |\setu|/q}}
   \right)^{1/p^*}.
\end{eqnarray*}
This shows the result for $1<p<\infty$. For $p=\infty$, the result is obtained by letting $p \rightarrow \infty$. This completes the proof.
\end{proof}

We now apply this theorem to two important classes of weights: product weights and product order-dependent weights.

\subsection{Product Weights}\label{sec:product}
We assume in this section that the weights have the following
{\em product} form
\[
\gamma_\setu\,=\,\prod_{j\in\setu}\gamma_j\quad\mbox{for}\quad
 1 \ge \gamma_j\,\ge\,\gamma_{j+1}\,>\,0,
\] 
introduced in \cite{SlWo98}. 
Here the empty product is considered to be 1, i.e.,
$\gamma_{\emptyset}=1$. As already mentioned, for product weights we always
have $\setU=2^{[s]}$.

\begin{proposition}\label{proptrunc}
For product weights and $k<s$,  the truncation error is bounded by
\begin{eqnarray*} 
\left(\sum_{\setu\not\subseteq[k]}
\frac{\gamma_\setu^{p^*}}{(\frac{q}{p^*}+1)^{p^* |\setu|/q}}
\right)^{1/p^*}  & \le &  \prod_{j=1}^s\left(1+\frac{\gamma_j^{p^*}}{(\frac{q}{p^*}+1)^{p^*/q}}\right)^{1/p^*} \\
& & \times \left(1-\exp\left(\frac{-1}{(\frac{q}{p^*}+1)^{p^*/q}}\,
\sum_{j=k+1}^s \gamma_j^{p^*}
\right)\right)^{1/p^*}
\end{eqnarray*}
for $p>1$, and it is equal to
\[
  \max_{\setu\not\subseteq[k]}\gamma_\setu \quad\mbox{for\ }p=1.
\]
\end{proposition}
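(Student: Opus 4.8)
The plan is to exploit the product structure of the weights in order to turn the sum over all $\setu\not\subseteq[k]$ into a difference of two fully factored products, and then to control the resulting ``tail excess'' by the elementary inequality $1+x\le e^x$. I would treat the case $p>1$ in detail (the case $p=1$ is essentially immediate), so assume first $1<p<\infty$, the limiting case $p=\infty$ following by letting $p\to\infty$.

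First I would abbreviate, for each coordinate $j$,
\[
\beta_j\,:=\,\frac{\gamma_j^{p^*}}{(\frac{q}{p^*}+1)^{p^*/q}},
\]
so that, because the weights are of product form, every summand factors as $\gamma_\setu^{p^*}/(\frac{q}{p^*}+1)^{p^*|\setu|/q}=\prod_{j\in\setu}\beta_j$. Using the standard identity $\sum_{\setu\subseteq T}\prod_{j\in\setu}\beta_j=\prod_{j\in T}(1+\beta_j)$ for any finite index set $T$, I would then write the sum over $\setu\not\subseteq[k]$ as the full sum minus the sum over $\setu\subseteq[k]$:
\[
\sum_{\setu\not\subseteq[k]}\prod_{j\in\setu}\beta_j\,=\,\prod_{j=1}^s(1+\beta_j)-\prod_{j=1}^k(1+\beta_j)\,=\,\prod_{j=1}^k(1+\beta_j)\left(\prod_{j=k+1}^s(1+\beta_j)-1\right).
\]

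The remaining step is to compare this with the $p^*$-th power of the claimed right-hand side. Factoring out the common factor $\prod_{j=1}^k(1+\beta_j)$ (equivalently, dividing the target inequality through by $\prod_{j=1}^s(1+\beta_j)$), the whole statement collapses to
\[
\prod_{j=k+1}^s(1+\beta_j)\,\exp\left(-\sum_{j=k+1}^s\beta_j\right)\,\le\,1,
\]
which follows at once from the termwise bound $1+\beta_j\le e^{\beta_j}$, i.e.\ from $1+x\le e^x$. Taking $p^*$-th roots then yields the stated inequality, upon recalling that $\sum_{j=k+1}^s\beta_j=(\frac{q}{p^*}+1)^{-p^*/q}\sum_{j=k+1}^s\gamma_j^{p^*}$. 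For $p=1$ (so $p^*=\infty$), the truncation error in Theorem~\ref{thm:dim} is already $\max_{\setu\not\subseteq[k]}\gamma_\setu$, so there is nothing further to prove; one may add that, since $1\ge\gamma_j\ge\gamma_{j+1}>0$, this maximum is attained at the singleton $\{k+1\}$ and equals $\gamma_{k+1}$.

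I do not expect a genuine obstacle here: the argument is a clean telescoping-of-products computation finished off by $1+x\le e^x$ applied coordinatewise. The only points requiring a little care are the bookkeeping in splitting $\sum_{\setu\not\subseteq[k]}$ as a difference of products, and, when $s=\infty$, interpreting the infinite products as limits — legitimate under the standing summability assumption, which guarantees their convergence.
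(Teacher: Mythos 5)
Your argument is correct and essentially identical to the paper's: both split $\sum_{\setu\not\subseteq[k]}$ into a difference of factored products and then apply $1+x\le e^x$ (the paper phrases it as $\log(1+x)\le x$) to the tail factor. The only cosmetic difference is that you factor out $\prod_{j=1}^k(1+\beta_j)$ while the paper factors out $\prod_{j=1}^s(1+\beta_j)$; also note that $p=\infty$ (i.e.\ $p^*=1$) is covered directly by the same computation, so no limiting argument is needed there.
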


\begin{proof}
The proof for $p=1$ is trivial. For $p>1$, we have
\begin{eqnarray*}
\lefteqn{\sum_{\setu\not\subseteq[k]}\frac{\gamma_\setu^{p^*}}{(\frac{q}{p^*}+1)^{p^* |\setu|/q}}
   =\sum_{\setu\subseteq [s]}\frac{\gamma_\setu^{p^*}}{(\frac{q}{p^*}+1)^{p^* |\setu|/q}}
   -\sum_{\setu\subseteq[k]}\frac{\gamma_\setu^{p^*}}{(\frac{q}{p^*}+1)^{p^* |\setu|/q}}}\\
   &=&\prod_{j=1}^s\left(1+\frac{\gamma_j^{p^*}}{(\frac{q}{p^*}+1)^{p^*/q}}\right)
   -\prod_{j=1}^k\left(1+\frac{\gamma_j^{p^*}}{(\frac{q}{p^*}+1)^{p^*/q}}\right)\\
   &=&\prod_{j=1}^s\left(1+\frac{\gamma_j^{p^*}}{(\frac{q}{p^*}+1)^{p^*/q}}\right)
   \left(1-\prod_{j=k+1}^s \left(1+\frac{\gamma_j^{p^*}}{(\frac{q}{p^*}+1)^{p^*/q}}\right)^{-1}\right).
\end{eqnarray*}
We have
\begin{eqnarray*}
1-\prod_{j=k+1}^s\left(1+\frac{\gamma_j^{p^*}}{(\frac{q}{p^*}+1)^{p^*/q}}
\right)^{-1}
 &=&1-\exp\left(-\sum_{j=k+1}^s\log\left(1+\frac{\gamma_j^{p^*}}{(\frac{q}{p^*}+1)^{p^*/q}}\right)\right)\\
 &\le&1-\exp\left(-\sum_{j=k+1}^s \frac{\gamma_j^{p^*}}{(\frac{q}{p^*}+1)^{p^*/q}}\right),
\end{eqnarray*}
where $\log$ denotes the natural logarithm, and the last inequality is due to $\log (1+x)\le x$ for all $x >-1$. This completes 
the proof. 
\end{proof}

We have the following corollaries:

\begin{corollary}
Consider product weights. Then ${\rm dim^{trnc}}(\varepsilon)$ is
bounded from above by   
\[
  \min\left\{k\,:\,
1-\exp\left(\frac{-1}{(\frac{q}{p^*}+1)^{p^*/q}}\sum_{j=k+1}^s
\gamma_j^{p^*}\right)\le\frac{\varepsilon^{p^*}}{\prod_{j=1}^s
  \left(1+\frac{\gamma_j^{p^*}}{(\frac{q}{p^*}+1)^{p^*/q}}\right)}
  \right\}
\]
for $p>1$, and is equal to 
\[
  \min\left\{k\,:\,\max_{\setu\not
  \subseteq[s]}\gamma_\setu\le\varepsilon\right\}
\]
for $p=1$. 
\end{corollary}

\begin{corollary}\label{cortrunc}
  Consider product weights  and $k<s$.

Then the error $e(A^{\rm trnc}_{s,k};F_{s,p,\bsgamma})$ is bounded from
above by
\begin{eqnarray*}
&&  \Bigg([e(A_{k};F_{k,p,\bsgamma})]^{p^*}\\
  && +\prod_{j=1}^s\left(1+\frac{\gamma_j^{p^*}}{(\frac{q}{p^*}+1)^{p^*/q}}\right)\,
\left(1-\exp
\left(\frac{-1}{(\tfrac{q}{p^*}+1)^{p^*/q}}\sum_{j=k+1}^s 
\gamma_j^{p^*}\right)\right)
\Bigg)^{1/p^*}
\end{eqnarray*}
for $p>1$, and by 
\[
  \max\left(e(A_{k};F_{k,1,\bsgamma})\,,\,
\max_{\setu\not\subseteq[k]}\gamma_\setu
\right)  
\]
for $p=1$. 
Note that if $\gamma_j\le1$ for all $j$ then 
$\max_{\setu\not\subseteq[s]}\gamma_\setu\,=\,\gamma_{k+1}$. 

Therefore, for the worst case error of $A_{s,k}^{\rm trnc}$ not to
exceed the error demand $\varepsilon>0$, it is enough to choose 
$k=k(\varepsilon)$
so that
\begin{equation}\label{trunccrit}
  1-\exp\left(\frac{-1}{(\frac{q}{p^*}+1)^{p^*/q}}\,
  \sum_{j=k+1}^s \gamma_j^{p^*}\right)\,\le\,
  \frac{1}{2} \varepsilon^{p^*} \prod_{j=1}^s\left(1+\frac{\gamma_j^{p^*}}{(\frac{q}{p^*}+1)^{p^*/q}}\right)^{-1} ,
\end{equation}
(or $\gamma_{k+1}\le\varepsilon$
for $p=1$),
and next to choose $n=n(\varepsilon)$ so that
\[
 e(A_{k};F_{k,p,\bsgamma})\,\le\,\frac{\varepsilon}{2^{1/p^*}}.
\]
\end{corollary}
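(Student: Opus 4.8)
The plan is to derive the two error bounds as immediate consequences of Theorem~\ref{thmtrunc} and Proposition~\ref{proptrunc}, and then to read off the sufficiency criterion by splitting the error budget. For $p>1$, Theorem~\ref{thmtrunc} already gives
\[
e(A^{\rm trnc}_{s,k};F_{s,p,\bsgamma})\le\left([e(A_{k};F_{k,p,\bsgamma})]^{p^*}+\sum_{\setu\not\subseteq[k]}\frac{\gamma_\setu^{p^*}}{(\frac{q}{p^*}+1)^{p^*|\setu|/q}}\right)^{1/p^*}.
\]
First I would note that the map $x\mapsto(a+x)^{1/p^*}$ is nondecreasing for $x\ge0$ (with $p^*\ge1$), so any upper bound on the inner sum may be substituted without reversing the inequality. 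Raising the estimate of Proposition~\ref{proptrunc} to the power $p^*$ bounds precisely that sum by the product-times-exponential-gap expression, and inserting this yields the stated bound for $p>1$. For $p=1$ nothing needs to be substituted: the second bound of Theorem~\ref{thmtrunc} is already the claimed $\max(\cdot,\cdot)$ expression, using that Proposition~\ref{proptrunc} identifies the truncation error in the $p=1$ case as $\max_{\setu\not\subseteq[k]}\gamma_\setu$.

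Next I would verify the remark that $\max_{\setu\not\subseteq[k]}\gamma_\setu=\gamma_{k+1}$ when $\gamma_j\le1$ for all $j$. This is an elementary monotonicity argument: for product weights every factor satisfies $0<\gamma_j\le1$, so adjoining any further index to $\setu$ can only decrease (or leave unchanged) the product $\prod_{j\in\setu}\gamma_j$. Hence the maximum over sets $\setu$ that are \emph{not} contained in $[k]$, i.e.\ that contain at least one index exceeding $k$, is attained at a singleton $\{j\}$ with $j>k$; and since $\gamma_j\ge\gamma_{j+1}$ the largest such value is $\gamma_{k+1}$.

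Finally I would justify the sufficiency of the displayed choices of $k$ and $n$, the idea being to split the budget $\varepsilon^{p^*}$ into two equal halves. For $p>1$, demanding that the product-times-gap term be at most $\tfrac12\varepsilon^{p^*}$ is exactly condition \eqref{trunccrit}, while demanding $[e(A_{k};F_{k,p,\bsgamma})]^{p^*}\le\tfrac12\varepsilon^{p^*}$, equivalently $e(A_{k};F_{k,p,\bsgamma})\le\varepsilon/2^{1/p^*}$, controls the remaining half; the two halves sum to $\varepsilon^{p^*}$, and taking the $1/p^*$ power returns $\varepsilon$. For $p=1$ one has $p^*=\infty$, so $2^{1/p^*}=1$ and the two conditions degenerate into $\gamma_{k+1}\le\varepsilon$ together with $e(A_{k};F_{k,1,\bsgamma})\le\varepsilon$, consistent with the $\max$ bound.

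I do not expect a genuine obstacle, since the corollary is essentially a repackaging of Theorem~\ref{thmtrunc} and Proposition~\ref{proptrunc}. The only points demanding care are the monotone-substitution step (so that the Proposition's bound may legitimately replace the sum inside the increasing function), the limiting cases $p=\infty$ (where $p^*=1$) and $p=1$ (where $p^*=\infty$), and checking that splitting the budget as $\tfrac12+\tfrac12$ is genuinely sufficient rather than attempting to optimize the split.
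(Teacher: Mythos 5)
Your proposal is correct and matches the paper's (implicit) derivation: the corollary is stated without a separate proof precisely because it follows by inserting the bound of Proposition~\ref{proptrunc} into Theorem~\ref{thmtrunc} via the monotone map $x\mapsto(a+x)^{1/p^*}$ and then splitting the budget $\varepsilon^{p^*}$ into two equal halves, exactly as you describe. Your handling of the side remark (that $\max_{\setu\not\subseteq[k]}\gamma_\setu=\gamma_{k+1}$ for nonincreasing product weights bounded by $1$) and of the degenerate cases $p=1$ and $p=\infty$ is also correct.
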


Clearly the inequality \eqref{trunccrit} for $p>1$ is equivalent to
\begin{equation}\label{star}
  \sum_{j=k+1}^s \gamma_j^{p^*}\,\le\,-\left(\frac{q}{p^*}+1\right)^{p^*/q}\,\log\left(1-
 \frac{1}{2} \varepsilon^{p^*} \prod_{j=1}^s\left(1+\frac{\gamma_j^{p^*}}{(\frac{q}{p^*}+1)^{p^*/q}}\right)^{-1}\right).
\end{equation}

\begin{example}\rm \label{ex:1}
Consider large $s$ including $s=\infty$ and
\[
\gamma_\setu\,=\,\prod_{j\in\setu}j^{-a}\quad\mbox{for\ }
a\,>\,1/p^*.
\]

For $p=1$, we have
\[
{\rm dim^{trnc}}(\varepsilon)\,=\,\left\lceil 
\varepsilon^{-1/a}-1\right\rceil.
\]
In particular we have  
\[
\begin{array}{c||c|c|c|c|c|c||l}
\varepsilon &  10^{-1}  &  10^{-2}  & 10^{-3}  & 10^{-4}  & 10^{-5}  &  10^{-6} & \\
\hline
{\rm dim^{trnc}}(\varepsilon)& 3  &    9  & 31 & 99 & 316  &  999
&a=2\\
\hline
{\rm dim^{trnc}}(\varepsilon)& 2   &  4   &  9   & 21     &  46      &  99
&a=3\\
\hline
{\rm dim^{trnc}}(\varepsilon)& 1   &  3   &  5   & 9     &   17      &  31
&a=4\\
\hline
{\rm dim^{trnc}}(\varepsilon)& 1   &  2   &  3   & 6     &  9        &  15
&a=5
\end{array}
\] 

\medskip 
Consider next $p>1$.
Unlike in the case $p=1$, we do not know the exact values of the truncation
dimension. However, we have its upper bounds $k(\varepsilon)$ that are
small,
\[
  {\rm dim^{trnc}}(\varepsilon)\,\le\,k(\varepsilon).
\]
We use the estimate
\[
  \frac{(k+1)^{-ap^*+1}}{ap^*-1}\,=\,\int_{k+1}^\infty x^{-ap^*}\rd x\,<\,
\sum_{j=k+1}^\infty j^{-ap^*}\,\le\,\int_{k+1/2}^\infty x^{-ap^*}\rd x\,=\,
\frac{(k+1/2)^{-ap^*+1}}{a\,p^*-1}.
\]
Note that the relative error when using the upper bound to approximate
the sum is bounded by 
\[
\frac{a\,p^*-1}{2k+1}+O(k^{-2})
\]
and is small for large $k$.
To satisfy \eqref{star}, it is enough to take
$k=k(\varepsilon)$ given by 
\[
k\,=\,   
\left\lceil\left(\frac{-(\tfrac{q}{p^*}+1)^{-p^*/q}\,(ap^*-1)^{-1}}
{
  \log\left(1-\frac{\varepsilon^{p^*}}{2}  \prod_{j=1}^s\left(1+\frac{j^{-a p^*}}{(\frac{q}{p^*}+1)^{p^*/q}}\right)^{-1}\right)}
\right)^{1/(ap^*-1)}-\frac12\right\rceil.
\]

For $p=p^*=2$, which corresponds to the classical Hilbert space
setting, we have
\begin{equation}\label{eqkeps}
k(\varepsilon)\,=\,
\left\lceil \left(\frac{-(\tfrac{q}{2}+1)^{-2/q}\,(2\,a-1)^{-1}}{
\log\left(1-\frac{\varepsilon^2}{2}  \prod_{j=1}^s\left(1+\frac{j^{-2a}}{(\frac{q}{2}+1)^{2/q}}\right)^{-1}\right)}\right)^{1/(2a-1)}
-\tfrac12\right\rceil.
\end{equation} 
If also $q=2$, then 
\[
k(\varepsilon)\,=\,\left\lceil \left(-2\,(2\,a-1)\,
\log\left(1-\frac{\varepsilon^2}{2}  \prod_{j=1}^s\left(1+\frac{j^{-2a}}{2}\right)^{-1}\right)\right)^{-1/(2a-1)}
-\tfrac12\right\rceil.
\]

Since $s$ could  be huge or $s=\infty$, 
in calculating the values of $k(\varepsilon)$, we slightly overestimated the
product $\prod_{j=1}^s\left(1+\frac{j^{-2a}}{2}\right)$ in the following way:
\begin{eqnarray*}
  \prod_{j=1}^s\left(1+\frac{j^{-2a}}{2}\right) &\le& \prod_{j=1}^\infty \left(1+\frac{j^{-2a}}2\right)
   \,\le\,\prod_{j=1}^{1000}\left(1+\frac{j^{-2a}}2\right)\,
   \exp\left(\sum_{j=1001}^\infty\frac{j^{-2a}}2\right)\\
  &\le&\prod_{j=1}^{1000}\left(1+\frac{j^{-2a}}2\right)\,
   \exp\left(\frac12\,\int_{1000.5}^\infty x^{-2a}\rd x\right)\\
  &=&   \prod_{j=1}^{1000}\left(1+\frac{j^{-2a}}2\right)\,
  \exp\left(\frac1{2\,(2a-1)}\,1000.5^{-2a+1}\right).
\end{eqnarray*}
This gave us the following estimations for $\prod_{j=1}^s\left(1+\frac{j^{-2a}}{2}\right)$
for $p=2$:
\[
   1.56225\ \mbox{for\ }a=2,\qquad 1.51302 \ \mbox{for\ }a=3, \qquad
1.50306\ \mbox{for\ }a=4, \qquad
1.50075\ \mbox{for\ }a=5.
\] 
Below we give values of $k(\varepsilon)$ for $a=2,3,4,5$ and $p=q=2$ 
using the estimates above.
\[
\begin{array}{c||c|c|c|c|c|c||l}
\varepsilon &  10^{-1}  &  10^{-2}  & 10^{-3}  & 10^{-4}  & 10^{-5}  &  10^{-6} & \\
\hline
k(\varepsilon)& 4      &  17       & 80      & 373      & 1733    & 8045
&a=2\\
\hline
k(\varepsilon)& 2      &  5       &  12      & 31     & 79      & 198
&a=3\\
\hline
k(\varepsilon)& 2      &  3        &  6     & 11     &  22     &  42
&a=4\\
\hline
k(\varepsilon)& 1      &  2        &  4     & 6     &  11     &  18
&a=5
\end{array}
\]
It is clear that $k(\varepsilon)$ decreases with increasing $a$. 
To check whether the estimates above are sharp, we also calculated 
$k(\varepsilon)$ for $s=1000000$ directly by computing
\[
\prod_{j=1}^s\left(1+\frac{j^{-2a}}2\right)-
\prod_{j=1}^k\left(1+\frac{j^{-2a}}2\right)
\]
and choosing the smallest $k$ for which the difference above is
not greater than $\varepsilon^2/2$. The values of $k(\varepsilon)$
obtained this way are exactly the same.

We now consider $p=\infty$ and $q=2$. By computing 
\[
\prod_{j=1}^s\left(1+\frac{j^{-a}}{\sqrt{3}}\right)-
\prod_{j=1}^k\left(1+\frac{j^{-a}}{\sqrt{3}}\right)
\]
we obtained the following values of $k(\varepsilon)$ for $s=1000000$, which is the smallest $k$ for which the difference above is not greater than $\varepsilon/2$.
\[
\begin{array}{c||c|c|c|c|c|c||l}
\varepsilon &  10^{-1}  &  10^{-2}  & 10^{-3}  & 10^{-4}  & 10^{-5}  &  10^{-6} & \\
\hline
k(\varepsilon)& 3      &  10        &  32    & 101     &  319      & 1010
&a=3\\
\hline
k(\varepsilon)& 2      &  4        &  9     & 19    &   40      & 86
&a=4\\
\hline
k(\varepsilon)& 1      &  3        &  5     & 8     &   15      & 26
&a=5
\end{array}
\]

We do not present the values of $k(\varepsilon)$ for $a=2$ since they 
are too large to be of practical interest. For instance 
$k(10^{-3})=2069$ and $k(10^{-4})=7230$. 

For some particular values of $q$, the norm of the embedding operator 
$S$ is known, as for example for $q=1$, in which case it equals 
$(1+p^*)^{-1/p^*}$.

Let us now consider the case $p=p^*=2$, $s=1000000$, and $q=1$. In 
this case we obtain from \eqref{eqkeps}: 
\[
\begin{array}{c||c|c|c|c|c|c||l}
\varepsilon &  10^{-1}  &  10^{-2}  & 10^{-3}  & 10^{-4}  & 10^{-5}  &  10^{-6} & \\
\hline
k(\varepsilon)& 4      &  16       & 76      & 354      & 1643    & 7628
&a=2\\
\hline
k(\varepsilon)& 2      &  5       &  12      & 30     & 76      & 192
&a=3\\
\hline
k(\varepsilon)& 2      &  3        &  6     & 11     &  21     &  41
&a=4\\
\hline
k(\varepsilon)& 1      &  2        &  4     & 6     &  10     &  17
&a=5
\end{array}
\]
For comparison, we also consider the values of $k(\varepsilon)$, by using the precise formula for 
$\| S_\setu\|=\| S \|^{|\setu|}=(1+p^*)^{-|\setu |/p^*}$ in the proof of Theorem \ref{thmtrunc}. This yields, 
instead of \eqref{eqkeps}:
\begin{equation}\label{eqkepsexact}
k(\varepsilon)\,=\,\left\lceil \left(-3\,(2\,a-1)\,
\log\left(1-\frac{\varepsilon^2}{2}  \prod_{j=1}^s\left(1+\frac{j^{-2a}}{3}\right)^{-1}\right)\right)^{-1/(2a-1)}-\tfrac12\right\rceil.
\end{equation} 
Then we obtain from \eqref{eqkepsexact}:
\[
\begin{array}{c||c|c|c|c|c|c||l}
\varepsilon &  10^{-1}  &  10^{-2}  & 10^{-3}  & 10^{-4}  & 10^{-5}  &  10^{-6} & \\
\hline
k(\varepsilon)& 3      &  14       & 67      & 312      & 1449   & 6727
&a=2\\
\hline
k(\varepsilon)& 2      &  4       &  11      & 28     & 71      & 178
&a=3\\
\hline
k(\varepsilon)& 1      &  3        &  5     & 10     &  20     &  39
&a=4\\
\hline
k(\varepsilon)& 1      &  2        &  4     & 6     &  10     &  17
&a=5
\end{array}
\]
We see that the values of $k(\varepsilon)$ computed 
using the precise value of $\| S\|$ are lower than 
our general bounds, 
but not too much. 
\end{example}

\subsection{Product Order-Dependent Weights}\label{sec:pod}
We assume in this section that the weights have the following 
{\em product order-dependent (POD)} form, 
\[
\gamma_{\setu}\,=\,c_1\,(|\setu|!)^{b}\,\prod_{j\in\setu}\gamma_j,
\]
introduced in \cite{KSS12}. Here $c_1$ is a positive constant. 
Since the truncation error for $p=1$ is 
$\max_{\setu\not\subseteq[k]}\gamma_\setu$, we restrict the
attention in this section to $p>1$, i.e., $p^*<\infty$.
We will use $[k+1:s]$ to denote 
\[
   [k+1:s]\,=\,\{k+1,k+2,\dots,s\} \quad\mbox{or}\quad
   \{k+1,k+2,\dots\}\ \mbox{\ if\ }\ s\,=\,\infty.
\]

\begin{proposition} For POD weights and $k< s$, the truncation error is
bounded by
\[
\left(\sum_{\setu\not\subseteq[k]}\frac{\gamma_\setu^{p^*}}
{(\tfrac{q}{p^*}+1)^{p^* |\setu|/q}}\right)^{1/p^*}\,\le\, \left(\sum_{\setv\subseteq[k]}\frac{\gamma_\setv^{p^*}}{(\tfrac{q}{p^*}+1)^{p^* |\setv|/q }} \right)^{1/p^*}\,T(k),
\]   
where
\[T(k)\,=\,\left(\sum_{l=1}^{s-k}\left(\frac{(l+k)!}{k!}\right)^{b\,p^*}\,
\frac1{(\tfrac{q}{p^*}+1)^{p^* l/q}}\sum_{\substack{\setw\subseteq[k+1:s] \\ |\setw|=l}}
\prod_{j\in\setw}\gamma_j^{p^*}\right)^{1/p^*}.
\]
\end{proposition}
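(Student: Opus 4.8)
The plan is to reduce the sum over $\setu\not\subseteq[k]$ to a product of a sum over subsets of $[k]$ and a sum over subsets of $[k+1:s]$, by splitting each $\setu$ into its low and high coordinates and then \emph{decoupling} the order-dependent factorial factor. Concretely, for $\setu\not\subseteq[k]$ I would write $\setv=\setu\cap[k]$ and $\setw=\setu\cap[k+1:s]$; since $\setu\not\subseteq[k]$ we have $\setw\neq\emptyset$, so $l:=|\setw|\in\{1,\dots,s-k\}$ and $|\setu|=|\setv|+l$. This sets up a bijection between the sets $\setu\not\subseteq[k]$ and the pairs $(\setv,\setw)$ with $\setv\subseteq[k]$ and $\emptyset\neq\setw\subseteq[k+1:s]$, giving
\[
\sum_{\setu\not\subseteq[k]}\frac{\gamma_\setu^{p^*}}{(\tfrac{q}{p^*}+1)^{p^*|\setu|/q}}
=\sum_{\setv\subseteq[k]}\sum_{l=1}^{s-k}\sum_{\substack{\setw\subseteq[k+1:s]\\|\setw|=l}}\frac{\gamma_{\setv\cup\setw}^{p^*}}{(\tfrac{q}{p^*}+1)^{p^*(|\setv|+l)/q}}.
\]

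Next I would expand the POD weight of the merged set as $\gamma_{\setv\cup\setw}^{p^*}=c_1^{p^*}\big((|\setv|+l)!\big)^{bp^*}\prod_{j\in\setv}\gamma_j^{p^*}\prod_{j\in\setw}\gamma_j^{p^*}$, and the whole argument hinges on controlling the coupled factorial $\big((|\setv|+l)!\big)^{bp^*}$ by a factor depending only on $\setv$ times a factor depending only on $l$ and $k$. This is the one nontrivial step, and it rests on the elementary inequality that, because $|\setv|\le k$,
\[
\frac{(|\setv|+l)!}{|\setv|!}=\prod_{i=1}^{l}(|\setv|+i)\le\prod_{i=1}^{l}(k+i)=\frac{(k+l)!}{k!}.
\]
Assuming $b\ge0$ (as is standard for POD weights) and raising to the power $bp^*$ preserves the inequality, so that $\big((|\setv|+l)!\big)^{bp^*}\le(|\setv|!)^{bp^*}\big((k+l)!/k!\big)^{bp^*}$. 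Recognising $\gamma_\setv^{p^*}=c_1^{p^*}(|\setv|!)^{bp^*}\prod_{j\in\setv}\gamma_j^{p^*}$, I then obtain
\[
\gamma_{\setv\cup\setw}^{p^*}\le\gamma_\setv^{p^*}\,\Big(\tfrac{(k+l)!}{k!}\Big)^{bp^*}\prod_{j\in\setw}\gamma_j^{p^*}.
\]

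Finally, substituting this bound and splitting the exponential factor as $(\tfrac{q}{p^*}+1)^{-p^*(|\setv|+l)/q}=(\tfrac{q}{p^*}+1)^{-p^*|\setv|/q}(\tfrac{q}{p^*}+1)^{-p^*l/q}$, the $\setv$-summation completely separates from the $(l,\setw)$-summation, yielding
\[
\sum_{\setu\not\subseteq[k]}\frac{\gamma_\setu^{p^*}}{(\tfrac{q}{p^*}+1)^{p^*|\setu|/q}}
\le\Bigg(\sum_{\setv\subseteq[k]}\frac{\gamma_\setv^{p^*}}{(\tfrac{q}{p^*}+1)^{p^*|\setv|/q}}\Bigg)\,T(k)^{p^*},
\]
and taking $p^*$-th roots gives the claim. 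I expect no analytic difficulty beyond the factorial inequality above; the remaining effort is purely bookkeeping, namely keeping the roles of $\setv$, $\setw$, $l$, and $k$ straight through the reindexing and the factorisation of the triple sum.
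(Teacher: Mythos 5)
Your proposal is correct and follows essentially the same route as the paper's proof: the same split of $\setu\not\subseteq[k]$ into $\setv=\setu\cap[k]$ and nonempty $\setw=\setu\cap[k+1:s]$, the same factorial decoupling via $(|\setv|+l)!/|\setv|!\le (k+l)!/k!$ using $|\setv|\le k$, and the same separation of the resulting double sum. Your explicit remark that $b\ge 0$ is needed to preserve the inequality after raising to the power $bp^*$ is a small point the paper leaves implicit.
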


\begin{proof}   
Of course we have
\begin{eqnarray*}
\sum_{\setu\not\subseteq[k]}\frac{\gamma_\setu^{p^*}}
{(\tfrac{q}{p^*}+1)^{p^* |\setu|/q}}&=&\sum_{\setv\subseteq[k]}
\sum_{\emptyset\not=\setw\subseteq[k+1:s]}
\frac{\gamma_{\setv\cup\setw}^{p^*}}{(\tfrac{q}{p^*}+1)^{p^*(|\setv|+|\setw|)/q}}\\
&=&\sum_{\setv\subseteq[k]}\frac{\gamma_\setv^{p^*}}{(\tfrac{q}{p^*}+1)^{p^* |\setv|/q }}
\,T(\setv,k)^{p^*},
\end{eqnarray*}
where
\begin{eqnarray*}T(\setv,k)^{p^*}&=&
\sum_{\emptyset\not=\setw\subseteq[k+1:s]}\left(\frac{(|\setv|+|\setw|)!}
{|\setv|!}\right)^{b\,p^*}\,\frac1{(\tfrac{q}{p^*}+1)^{p^* |\setw|/q}}\,
\prod_{j\in\setw}\gamma_j^{p^*}\\
&=&\sum_{l=1}^{s-k}\left(\frac{(|\setv|+l)!}{|\setv|!}\right)^{b\,p^*}\,
\frac1{(\tfrac{q}{p^*}+1)^{p^* l/q}}\,\sum_{\substack{\setw\subseteq[k+1:s]\\ |\setw|=l}}
\prod_{j\in\setw}\gamma_j^{p^*}.
\end{eqnarray*}
Since $|\setv|\le k$, we have
\[\frac{(|\setv|+l)!}{|\setv|!}\,\le\,\frac{(k+l)!}{k!}.
\]
This completes the proof. 
\end{proof}

\begin{example}\rm Consider large $s$ 
and
\[
  \gamma_j\,=\,\frac{c_2}{j^a}\quad\mbox{for\ }a\,>\,\max(1/p^*,b).
\]
Clearly
\begin{eqnarray*}
\sum_{\substack{\setw\subseteq[k+1:s]\\ |\setw|=l}}
\prod_{j\in\setw}\gamma_j^{p^*}
 &=& c_2^{l p^*}\,\sum_{j_1=k+1}^s j_1^{-a\,p^*}\,\sum_{j_2=j_1+1}^s j_2^{-a\,p^*}
\ldots\sum_{j_l=j_{l-1}+1}^s j_l^{-a\,p^*}\\
&\le& c_2^{l p^*}\,\int_{k+1/2}^\infty x_1^{-a\,p^*}\,
\int_{x_1}^\infty x_2^{-a\,p^*}\,
\ldots\int_{x_{l-1}}^\infty x_l^{-a\,p^*} \rd x_l\ldots\rd x_2\rd x_1 \\
&=&\left(\frac{c_2^{p^*}}{(k+1/2)^{a\,p^*-1}\,(a\,p^*-1)}\right)^l\,\frac1{l!}.
\end{eqnarray*}
Therefore
\begin{equation}\label{pod-res}
  T(k)\,\le\,\,\left(\sum_{l=1}^{s-k} 
\left(\frac{(l+k)!}{k!}\right)^{b\,p^*}\,\frac{y^l}{l!}\right)^{1/p^*}
\,=\,\left(\sum_{l=1}^{s-k}
\left((k+1)\cdots(l+k)\right)^{b\,p^*}\,\frac{y^l}{l!}\right)^{1/p^*}
\end{equation}
with
\[
 y\,=\,\frac{c_2^{p^*}}{(\tfrac{q}{p^*}+1)^{p^*/q}\,(a\,p^*-1)\,
  (k+1/2)^{a\,p^*-1}}. 
\]
Hence the upper bound in \eqref{pod-res} can be computed efficiently
using nested multiplication. We provide now the pseudo-code for doing
that:
\begin{eqnarray*}
&&y:=c_2^{p^*}/((q/p^*+1)^{p^*/q}(a\, p^*-1)(k+1/2)^{a\, p^*-1})\\
&&T:=y\, s^{b\, p^*}/(s-k)\\
&&\mbox{\bf for\ } l=s-k-1 \mbox{\ \bf to\ } 1 \mbox{\ \bf step\ } 
-1 \mbox{\ \bf do}\\
&&\mbox{}\qquad T:=(T+1)(l+k)^{b\, p^*}y /l\\
&&\mbox{\bf endfor}\\
&&T:=T^{1/p^*}.
\end{eqnarray*}

Furthermore, for $k\ge2$,
\begin{eqnarray*}
\lefteqn{\left(\sum_{\setv\subseteq[k]}\frac{\gamma_\setv^{p^*}}{(\tfrac{q}{p^*}+1)^{p^* |\setv|/q }}\right)^{1/p^*}}\\
& \le & c_1\left(1+c_2^{p^*} \sum_{j=1}^k\frac{j^{-a\,p^*}}{(q/p^*+1)^{p^*/q}}
  +\sum_{\substack{\setu\subseteq[k]\\ |\setu|\ge2}}\frac{(|\setu|!)^{b\,p^*}\,c_2^{|\setu|\, p^*}}{(q/p^*+1)^{p^*|\setu|/q}}
   \prod_{j\in\setu}j^{-a\,p^*}\right)^{1/p^*}.
\end{eqnarray*}
Now we provide an estimate for the last sum in this expression.
We have
\begin{eqnarray*}
\lefteqn{\sum_{\setu\subseteq[k]\atop |\setu|\ge2}\frac{(|\setu|!)^{b\,p^*}\,c_2^{|\setu|\, p^*}}{(q/p^*+1)^{p^*|\setu|/q}}
   \prod_{j\in\setu}j^{-a\,p^*}}\\
 &=&
 \sum_{\ell=2}^k \frac{(\ell !)^{b\,p^*}\,c_2^{\ell\, p^*}}{(q/p^*+1)^{p^*\ell/q}}\sum_{\setu\subseteq[k]\atop |\setu|=\ell}
 \prod_{j\in\setu}j^{-a\,p^*}\\
 &=&\sum_{\ell=2}^k \frac{(\ell !)^{b\,p^*}\,c_2^{\ell\, p^*}}{(q/p^*+1)^{p^*\ell/q}}
 \sum_{\substack{\setu\subseteq[k]\\ |\setu|=\ell\\ \{1\}\subseteq\setu}}
 \prod_{j\in\setu}j^{-a\,p^*} +
 \sum_{\ell=2}^{k-1} \frac{(\ell !)^{b\,p^*}\,c_2^{\ell\, p^*}}{(q/p^*+1)^{p^*\ell/q}}
 \sum_{\substack{\setu\subseteq[2:k]\\ |\setu|=\ell}}
 \prod_{j\in\setu}j^{-a\,p^*}\\
 &=&\sum_{\ell=2}^k \frac{(\ell !)^{b\,p^*}\,c_2^{\ell\, p^*}}{(q/p^*+1)^{p^*\ell/q}}
 \sum_{\substack{\setu\subseteq[2:k]\\ |\setu|=\ell-1}}
 \prod_{j\in\setu}j^{-a\,p^*} +
 \sum_{\ell=2}^{k-1} \frac{(\ell !)^{b\,p^*}\,c_2^{\ell\, p^*}}{(q/p^*+1)^{p^*\ell/q}}
 \sum_{\substack{\setu\subseteq[2:k]\\ |\setu|=\ell}}
 \prod_{j\in\setu}j^{-a\,p^*}.
 \end{eqnarray*}
For the two inner sums in the last expression we can now use the same 
method as in the derivation of \eqref{pod-res} for the terms with 
indices $\ell=2,\ldots,k-1$, and hence obtain
\begin{eqnarray}\label{bad}
\lefteqn{\sum_{\setu\subseteq[k]\atop |\setu|\ge2}\frac{(|\setu|!)^{b\,p^*}\,c_2^{|\setu|\, p^*}}{(q/p^*+1)^{p^*|\setu|/q}}
   \prod_{j\in\setu}j^{-a\,p^*}}\nonumber\\ 
   &\le&\sum_{\ell=2}^{k-1} (\ell !)^{b\,p^*-1}\left(\frac{c_2^{p^*}}{(q/p^*+1)^{p^*/q}(ap^*-1)1.5^{ap^*-1}}\right)^\ell
   (\ell (ap^*-1)1.5^{ap^*-1} +1)\nonumber\\
   &&+\frac{ (k!)^{b\,p^*-a\,p^*} c_2^{k p^*}}{(q/p^*+1)^{p^* k/q}}.
\end{eqnarray}

\end{example}

In analogy to product weights and using the upper bounds above, we calculated numbers 
$k=k(\varepsilon)$ which 
guarantee that
\[
 \left(\sum_{\setu\not\subseteq[k]}\frac{\gamma_\setu^{p^*}}
{(\tfrac{q}{p^*}+1)^{p^* |\setu|/q}}\right)^{1/p^*}\le \frac{\varepsilon}{2^{1/p^*}}.
\]
Since the upper bound \eqref{bad} is not sharp for large $k$, i.e.,
small $\varepsilon$, we calculated the values of $k(\varepsilon)$ only for $a=4$. More
precisely we did it for $s=10000$, $b=c_1=c_2=1$, $q=2$, $p=2$ and $p=\infty$,
and $a=4$. 

\[
  \begin{array}{c||c|c|c|c|c|c||l}
  \varepsilon     & 10^{-1} & 10^{-2} & 10^{-3}&10^{-4} & 10^{-5} & 10^{-6}& \\
\hline
\hline
  k(\varepsilon)   & 3      & 8     &  26    & 81      & 256    & 809
  &p=\infty\\
\hline
  k(\varepsilon)   & 2      & 5     &  12   &  29  &  74     & 185
&p=2\end{array} 
\]

%%%%%%%%%%%%%%%%%%%%%%%%%%%%%%%%%%%%%%%%%%%%%%%%%%%%%%%%%%%%%%%%%%%%%%%%%%%%%%%%%%%%%%%%%%%

\noindent {\bf Author's Address}\\

\noindent Peter Kritzer, Johann Radon Institute for Computational and Applied Mathematics (RICAM), Austrian Academy of Sciences, Altenbergerstr. 69, 4040 Linz, Austria. \\
\noindent Email: \texttt{peter.kritzer(AT)oeaw.ac.at}\\
\\

\noindent Friedrich Pillichshammer, Department of Financial Mathematics and Applied Number Theory, Johannes Kepler University Linz, Altenbergerstr. 69, 4040 Linz, Austria. \\
\noindent Email: \texttt{friedrich.pillichshammer(AT)jku.at}\\
\\

\noindent G.W. Wasilkowski, Computer Science Department, University of Kentucky, 301 David Marksbury Building, Lexington, KY 40506, USA.\\
\noindent Email: \texttt{greg(AT)cs.uky.edu}

\end{document}